 \definecolor{cupgreen}{rgb}{0,0.498,0.208}
  \definecolor{cupblue}{rgb}{0,0,.5}
  \definecolor{cupred}{rgb}{1,0.04,0}
  \definecolor{cuppink}{rgb}{0.925,0,0.545}
  \definecolor{cupmagenta}{rgb}{0.624,0.161,0.424}
  \definecolor{cupbrown}{rgb}{0.71,0.212,0.133}
  \definecolor{cupgreen}{rgb}{0,0,0}
  \definecolor{cupblue}{rgb}{0,0,0}
  \definecolor{cupred}{rgb}{0,0,0}
  \definecolor{cuppink}{rgb}{0,0,0}
  \definecolor{cupmagenta}{rgb}{0,0,0}
  \definecolor{cupbrown}{rgb}{0,0,0}
\definecolor{TITLE}{rgb}{0,0,0}
\definecolor{AUTHOR1}{rgb}{0.00,0.59,0.00}
\definecolor{AUTHOR2}{rgb}{0.50,0.00,1.00}
\definecolor{SECTION}{rgb}{0.50,0.00,1.00}
\definecolor{FOOTTITLE}{rgb}{0.00,0.50,0.75}
\definecolor{THM}{rgb}{0.8,0,0.1}
\definecolor{SEC}{rgb}{0,0,1}
\newtheorem{theorem}{{\color{THM} Theorem}}[section]
\newtheorem{lemma}[theorem]{{\color{THM}Lemma}}
\newtheorem{proposition}[theorem]{{\color{THM}Proposition}}
\newtheorem{corollary}[theorem]{{\color{THM}Corollary}}
\theoremstyle{definition}
\numberwithin{equation}{section}
\begin{document}

\newbox\Adr
\setbox\Adr\vbox{
\centerline{\sc Daniel Yaqubi$^1$ and Madjid Mirzavaziri$^2$  }
\vskip18pt
\centerline{$^{1}$Department of Pure Mathematics, Ferdowsi University of Mashhad}
\centerline{P. O. Box 1159, Mashhad 91775, Iran.}
\centerline{Email: {\tt daniel\_yaqubi@yahoo.es } } 
\vskip18pt
\centerline{$^{2}$Department of Pure Mathematics, Ferdowsi University of Mashhad}
\centerline{P. O. Box 1159, Mashhad 91775, Iran.}
\centerline{Email: {\tt mirzavaziri@um.ac.ir}.}} 

\title[Some results on ordered an unordered factrizations]{SOME RESULTS ON ORDERED AND UNORDERED
FACTORIZATIONS OF A POSITIVE INTEGER }

\author[ Daniel Yaqubi and Madjid Mirzavaziri]{\box\Adr}

\subjclass[2010]{11P81, 05A17.}
\keywords{Multiplicative partition function, Set partitions, Additive partition function, Polya Enumeration .}
\begin{abstract}
As a well-known enumerative problem, the number of solutions to the equation $m=m_1+\ldots+m_k$ with $m_1\leqslant\ldots\leqslant m_k$ in positive integers is $\Pi(m,k)=\sum_{i=0}^k\Pi(m-k,i)$ and $\Pi$ is called the additive partition function. In this paper, we give a recursive formula for the number of solutions to the equation $m=m_1\ldots m_k$ with $m_1\leqslant\ldots\leqslant m_k$ in positive integers. In particular, using elementary techniques, we give an explicit formula for the cases $k=1,2,3,4$.
\end{abstract}

\maketitle

\section{Introduction}
Let $\mathcal{F}(n;k,\ell)$ be the number of unordered factorizations of a positive
 integer $n$ to exactly $k$ parts, such that each parts $\geqslant\ell$.
  We denote the number of all unordered factorizations 
  of a positive integer $n$ by $\mathcal{F}(n)$, 
  that is the number of ways a positive integer $n$ 
  can be written as a product $n=n_1\times n_2\times\ldots\times  n_{k}$,
   where $n_1\geqslant n_2 \geqslant \ldots \geqslant n_{k} >1$. 
  The integers $n_1, n_2, \ldots, n_{k}$ are called the \textit{factors}
   of the factorization, and it's clearly that
    $\mathcal{F}(n)=\sum_{k=1}^n\mathcal{F}(n;k,2)$. We call
$\mathcal{F}(n)$ is \textit{the unordered Factorization function} of $n$. For example
 $\mathcal{F}(12)$, corresponding to
  $2\times6, 2\times2\times3, 3\times4$ and $12$.
   The sequence $\mathcal{F}(n)$ is listed in \cite{In1}.
    The Dirichlet generating function for $\mathcal{F}(n)$ is 
\[\prod_{k=2}^ \infty \frac{1}{1-k^{-s}}=\sum_{n=1}^{\infty}\frac{\mathcal{F}(n)}{n^s}.\]
 For positive integers $\ell, k\geqslant1$, we denote the number 
 of \textit{ordered factorization} of
 positive integer $n$ in exactly $k$ parts,
  such that each part $\geqslant\ell$ by $\mathcal{H}(n;k,\ell)$.
   We use $\mathcal{H}(n)$ to represent the number of all
    ordered factorization of the integer $n$ (in analogy with
     compositions for sum), then $\mathcal{H}(n)=\sum_{k=1}^n\mathcal{H}(n;k,2)$
     . An additive partition of a positive integer $n$ that denoted $p(n)$,
      is an integer $k$-tuple $n_1\geqslant n_2\geqslant \ldots\geqslant n_{k}>0$,
       for some $k$, such that $n=n_1+n_2+\ldots+n_{k}$ 
       (in analogy with factorization
function $\mathcal{F}(n)$ for product).
        The integers $n_1, n_2, \ldots, n_{k}$ are the \textit{parts} 
        of the partitions. For example $p(4)$ corresponding to, 
        $1+1+1+1, 1+1+2, 1+3, 2+2$ and $4$.
         It is important note that if
          $n=p_1^{\beta_1}p_2^{\beta_2}\ldots p_k^{\beta_k}$,
           where $p_1,p_2,\ldots,p_k$ are distinct
            prime numbers and $ \beta_i\in\mathbb{N}$ for
             $1\leqslant i\leqslant k$, then $\mathcal{F}(n)$ 
             and $\mathcal{H}(n)$ depend only to $\beta_1,\beta_2,\ldots,\beta_k$
             . For instance, if a positive integer $n$ is a
              prime power $n=p^k$, $k\geqslant1$, then
               $\mathcal{F}(n)=p(k)$, and $\mathcal{H}(n)=2^{k-1}$.
     Also, if a positive integer $n$ is square free as $n=p_1\times p_2 \times \ldots \times p_k$
               then $\mathcal{F}(n)=\sum_{i=1}^k {k \brace i}$, where ${n \brace k}$ is the \textit{Stirling number of the second kind}, and $\mathcal{H}(n)=\sum_{i=1}^k i! {k \brace i}$. 
               Let $\mathcal{F}(n;\{\beta_1,\ldots,\beta_r\},\ell)$, be the number of unordered factorizations of a positive integer $n$ as $n=n_1^{\beta_1}\times\ldots \times n_r^{\beta_r}$, such that $\beta_1+\ldots+\beta_r=k$ and $\ell\leqslant n_1<\ldots<n_r$, also, $\mathcal{H}(n;\{\beta_1,\ldots,\beta_r\},\ell)$ be the number of ordered factorizations of a positive integer $n$ as $n=n_1^{\beta_1}\times\ldots \times n_r^{\beta_r}$, such that $n_i\geqslant\ell$,
                 and $\{n_1,\ldots n_k\}=\{n'_1,\ldots,n'_r\}$ and $\beta_j=|\{i: n_i=n'_j\}|,$ for each $1\leqslant i,j\leqslant r$. For example, $\mathcal{F}(n;\{1,1,2\},\ell)$ is the number of unordered factorization positive integer $n$ as the form $xyz^2$, where $x, y$ and $z$ are different positive integers and $x>y>z\geqslant\ell$ and $\mathcal{H}(n;\{1,1,2\},\ell)=2!\mathcal{F}(n;\{1,1,2\},\ell)$. It is easy to see that
 \begin{eqnarray*}
\mathcal{F}(n;\{\beta_1,\ldots,\beta_r\},\ell)=\frac{( \beta_1+\ldots+\beta_r)!}{\beta_1!\ldots\beta_r!}\mathcal{H}(n;\{\beta_1,\ldots,\beta_r\},\ell).
 \end{eqnarray*}
  More on factorization partitions, including results on bounds and asymptotes of $\mathcal{F}(n)$ and algorithms for calculating the values, can be found in \cite{Can, Har, Knp} and \cite{Mat}.\\
The goal of this paper is to give some recursive formula for $\mathcal{F}(n)$ and $\mathcal{H}(n)$
 also we obtain $\mathcal{F}(n,k,\ell)$ and $\mathcal{H}(n,k,\ell)$ for cases $n=2,3,4$,  with elementary ways. Also, we give another proof of general formula for the number $\mathcal{H}_{\ell}(n,k)$ of ordered factorizations of a positive integer $n$ in exactly $k$ factors that each factor greater than $1$, was found in $1893$ by \textit{MacMahon} \cite{Mac}:
\[\mathcal{H}(n;k,2)=\sum_{i=0}^{k-1}(-1)^i{k\choose i}\prod_{j=1}^{n}{\beta_j+k-i-1\choose k-i-1}.\]
At the end, we closed our paper by posting several propositions about additive partition function $p(n)$.
\section{A Recursive Formula }
%
%
In this section we give some recursive formula $\mathcal{F}(n;k,\ell)$ and $\mathcal{H}(n;k,\ell)$. Let $n=n_1^{\beta_1}\times\ldots \times n_r^{\beta_r}$ be a positive integer, where $\beta_i\in\mathbb{N}$. By using of above notations, we can write
\begin{eqnarray}
\mathcal{F}(n;k,\ell)=\sum_{\begin{subarray}{c}\beta_1+\ldots+\beta_r=k;\\
\beta_1<\ldots<\beta_r,\end{subarray}}\mathcal{F}(n;\{\beta_1,\ldots,\beta_r\},\ell);
\end{eqnarray}
and
\begin{eqnarray}
\mathcal{H}(n;k,\ell)=\sum_{\beta_1+\ldots+\beta_r=k}\mathcal{H}(n;\{\beta_1,\ldots,\beta_r\},\ell).
\end{eqnarray}

\begin{proposition}\label{ltolplus}
Let $n>1$ and $k,\ell$ be positive integers. Suppose that $\ell^s$ divides $n$ but $\ell^{s+1}$ does not divide $n$. Then
\[\mathcal{F}(n;k,\ell)=\sum_{i=\max\{k-s,1\}}^{\min\{k,s\}}\mathcal{F}(n;i,\ell+1).\]
\end{proposition}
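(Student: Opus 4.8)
The plan is to stratify the set of solutions counted by $\mu_\ell(m,k)$ according to how many of the factors equal the minimal admissible value $\ell$. Because the factors are listed in non-decreasing order, in any solution $m=m_1\cdots m_k$ with $\ell\le m_1\le\cdots\le m_k$ the copies of $\ell$ occupy an initial segment: there is a unique $j\ge 0$ with $m_1=\cdots=m_j=\ell$ and $m_{j+1}>\ell$. Thus each solution splits canonically into $j$ copies of $\ell$ followed by a block $m_{j+1}\le\cdots\le m_k$ of $k-j$ factors, every one of which is at least $\ell+1$, and this $j$ is uniquely determined by the solution.

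First I would promote this splitting to a bijection. Deleting the $j$ leading copies of $\ell$ sends a solution with exactly $j$ factors equal to $\ell$ to a factorization $m/\ell^{\,j}=m_{j+1}\cdots m_k$ with $\ell+1\le m_{j+1}\le\cdots\le m_k$; conversely, any factorization of $m/\ell^{\,j}$ into $k-j$ parts $\ge\ell+1$, prefixed by $j$ copies of $\ell$, is a legitimate solution counted by $\mu_\ell(m,k)$ having precisely $j$ factors equal to $\ell$. Hence the stratum indexed by $j$ has cardinality $\mu_{\ell+1}(m/\ell^{\,j},k-j)$. To fix the range of $j$, note that having $j$ factors equal to $\ell$ forces $\ell^{\,j}\mid m$, so the hypothesis $\ell^{s+1}\nmid m$ gives $j\le s$, while trivially $0\le j\le k$. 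Summing over the admissible strata yields
\[\mu_\ell(m,k)=\sum_{j=0}^{\min\{k,s\}}\mu_{\ell+1}\!\left(\frac{m}{\ell^{\,j}},\,k-j\right),\]
and re-indexing by the number of surviving parts $i=k-j$ rewrites this as a sum of $\mu_{\ell+1}$-terms whose part-count runs over the interval I would then reconcile with the limits $\max\{k-s,1\}$ and $\min\{k,s\}$ in the statement.

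The main obstacle, and essentially the only delicate point, is the bookkeeping at the two ends of the range rather than the bijection itself. At the top end $j=\min\{k,s\}$ one must handle the degenerate factorizations: when $k\le s$ the extreme stratum is governed by $\mu_{\ell+1}(m/\ell^{\,k},0)$, an empty product contributing $1$ exactly when $m=\ell^{\,k}$ is a pure power of $\ell$ and $0$ otherwise, and these boundary contributions have to be matched against the claimed summation endpoints. I would therefore verify the identity on a few controlled instances — for example $m=p^{\alpha}$, where each $\mu_{\ell+1}(m/\ell^{\,j},k-j)$ can be read off directly — in order to pin down the precise endpoints and confirm that the non-decreasing ordering prevents any solution from being counted in two different strata before committing to the final closed form.
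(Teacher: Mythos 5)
Your stratification is exactly the paper's decomposition---split a nondecreasing factorization according to the number $j$ of leading factors equal to $\ell$---but, unlike the paper, you correctly record that deleting those $j$ copies changes the product, so the $j$-th stratum is counted by $\mu_{\ell+1}(m/\ell^{\,j},k-j)$, not by $\mu_{\ell+1}(m,k-j)$. The genuine gap is precisely the step you postponed: no re-indexing can reconcile your (correct) identity
\[\mu_\ell(m,k)=\sum_{j=0}^{\min\{k,s\}}\mu_{\ell+1}\Big(\frac{m}{\ell^{\,j}},\,k-j\Big)\]
with the stated right-hand side $\sum_{i=\max\{k-s,1\}}^{\min\{k,s\}}\mu_{\ell+1}(m,i)$, because the first arguments differ whenever $\ell\geqslant 2$ and $j\geqslant 1$. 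In fact the proposition as stated is false for $\ell\geqslant 2$: take $m=36$, $\ell=2$, $k=3$, so $s=2$. Then $\mu_2(36,3)=3$ (the factorizations $2\cdot2\cdot9$, $2\cdot3\cdot6$, $3\cdot3\cdot4$), while the claimed sum is $\mu_3(36,1)+\mu_3(36,2)=1+3=4$. Your formula gives $\mu_3(36,3)+\mu_3(18,2)+\mu_3(9,1)=1+1+1=3$, as it should. So the small-case testing you wisely planned would have terminated the ``reconciliation'' attempt: the statement cannot be proved as written.

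It is worth knowing that the paper's own proof commits exactly the conflation you avoided: it asserts $|E_i|=\mu_{\ell+1}(m,k-i)$ for the stratum with $i$ leading $\ell$'s, which is valid only when $\ell=1$ (then $m/\ell^{\,i}=m$); its final rewrite of the summation endpoints ($k$ versus $\min\{k,s\}$) is also unjustified for finite $s$. The case $\ell=1$ is, however, the only one the paper ever uses: Corollary~\ref{1to2} takes $\ell=1$, where $s$ is effectively unbounded, your strata run over $j=0,\ldots,k-1$ (the stratum $j=k$ would force $m=1$), and both your formula and the paper's collapse to the true identity $\mu_1(m,k)=\sum_{i=1}^k\mu_2(m,i)$. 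So your intermediate identity is the correct repaired version of the proposition, and to finish you should either restrict the statement to $\ell=1$ or replace its right-hand side by $\sum_i\mu_{\ell+1}(m/\ell^{\,k-i},i)$, rather than seek endpoint bookkeeping that would make the printed form come out.
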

\begin{proof}
Let
\[E=\{(n_1,n_2,\ldots,n_k): n=n_1\times n_2\times\ldots \times n_k, \ell\leqslant n_1\leqslant \ldots\leqslant n_k\}\]
and
\[E_i=\{(n_1,n_2,\ldots,n_k)\in E: n_1=n_2=\ldots=n_i=\ell, n_{i+1}\neq \ell\},\quad i=0,1,\ldots,\min\{k-1,s\}.\]
Then $E=\cup_{i=0}^{\min\{k-1,s\}}E_i$ and the union is disjoint. Thus
\[\mathcal{F}(n;k,\ell)=\sum_{i=0}^{\min\{k-1,s\}}|E_i|=\sum_{i=0}^{\min\{k-1,s\}}\mathcal{F}(n;k-i,\ell+1)=\sum_{i=\max\{k-s,1\}}^{\min\{k,s\}}\mathcal{F}(n;i,\ell+1).\]
\end{proof}
\begin{corollary}\label{1to2}
Let $n>1$ and $k,\ell$ be positive integers. Then
\[\mathcal{F}(n;k,1)=\sum_{i=1}^k\mathcal{F}(n;i,2).\]
\end{corollary}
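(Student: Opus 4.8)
The plan is to recognize Corollary~\ref{1to2} as the $\ell=1$ instance of Proposition~\ref{ltolplus}, with the important caveat that $\ell=1$ is precisely the degenerate case the Proposition's hypothesis excludes: since $1^s=1$ divides $m$ for every $s$, there is no finite exponent $s$ with $1^{s+1}\nmid m$. I would therefore not quote the Proposition verbatim, but instead re-run its disjoint-union argument with $\ell=1$, where the role of $s$ is now played by an effectively infinite exponent (any $s\geqslant k-1$ suffices), so that the truncations $\min\{k-1,s\}$ and $\max\{k-s,1\}$ appearing there collapse to $k-1$ and $1$ respectively.

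Concretely, first I would set
\[E=\{(m_1,\ldots,m_k): m=m_1\ldots m_k,\ 1\leqslant m_1\leqslant\ldots\leqslant m_k\}\]
and, for $i=0,1,\ldots,k-1$, let $E_i$ be the subset of $E$ in which exactly the first $i$ coordinates equal $1$; that is, $m_1=\ldots=m_i=1$ and $m_{i+1}\neq 1$. The one place the hypothesis $m>1$ enters is here: not every factor can be $1$, so at least one $m_j$ exceeds $1$, the index $i$ never reaches $k$, and it genuinely ranges only up to $k-1$. This yields the disjoint decomposition $E=\bigcup_{i=0}^{k-1}E_i$.

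Next I would compute $|E_i|$. Deleting the $i$ leading $1$'s from a tuple in $E_i$ leaves a tuple $(m_{i+1},\ldots,m_k)$ of $k-i$ factors, each at least $2$, still nondecreasing and with product $m$; this deletion is a bijection onto the solutions counted by $\mu_2(m,k-i)$, so $|E_i|=\mu_2(m,k-i)$. Summing over the disjoint pieces gives
\[\mu_1(m,k)=\sum_{i=0}^{k-1}\mu_2(m,k-i),\]
and the substitution $j=k-i$ reindexes the right-hand side as $\sum_{j=1}^{k}\mu_2(m,j)$, which is the claim.

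The step I expect to require the most care is the bookkeeping at the boundary of the degenerate case: confirming that the $\ell=1$ situation really does push the summation limits down to $1$ and $k$, and that the hypothesis $m>1$ is exactly what rules out the all-ones tuple so that the index stops at $k-1$ rather than $k$. Everything else is the same disjoint-union bijection already established in Proposition~\ref{ltolplus}.
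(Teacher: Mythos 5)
Your proof is correct and is essentially the paper's: the paper simply cites Proposition~\ref{ltolplus} (remarking that $1^s\mid m$ for every $s$, so the truncations disappear), and the disjoint-union decomposition by the number of leading $1$'s that you re-run with $\ell=1$ is exactly the proof of that Proposition. Your extra care about the degenerate hypothesis --- that no finite $s$ satisfies $1^{s+1}\nmid m$, so the Proposition cannot be quoted verbatim --- is a legitimate tightening of the paper's one-line appeal, but the underlying argument is the same.
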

\begin{proof}
Since for each positive integer $s$ we have $1^s\mid n$, the implication is a result of Proposition~\ref{ltolplus}.
\end{proof}

We end this section with the following result whose proof is straightforward.
\begin{lemma}
Let $n, k$ and $\ell$ be positive integers. Then
\begin{eqnarray*}
\mathcal{F}(n;k,\ell)=\sum_{\ell\leqslant d\mid n}\mathcal{F}_d(\frac{n}{d},k-1).
\end{eqnarray*}
\end{lemma}
\begin{proof}
For positive integer $n$ we define $\mathcal{F}_\ell(1,k)=1$ and $\mathcal{F}_\ell(n,0)=0$. Let $n=n_1\times n_2\times\ldots\times n_k$ be a factorization partition of $n$ that $n_1\geqslant n_2\geqslant \ldots\geqslant n_k\geqslant \ell >0$. Then
 $\frac{n}{n_1}=n_2\times n_3\times\ldots\times n_k$ is the factorization partitions of $\frac{n}{n_1}$ in $k-1$ factor that each factor greater than $\ell$, and the number of all such factorization partition is $\mathcal{F}_\ell(\frac{n}{n_1},k)$. Since $n_1$ was unspecified, therefore, we can write such factorization partition for any divisor $d\leqslant \ell$ of $n$. Summation over all such divisor $d$ of $n$ gives the proof.
\end{proof}
\begin{lemma}
Let $n, k$ and $\ell$ be positive integers. Then
\begin{eqnarray*}
\mathcal{H}(n;k,\ell)=\sum_{\ell\leqslant d\mid n}\mathcal{H}_d(\frac{n}{d},k-1).
\end{eqnarray*}
\end{lemma}


\section{An Explicit Formula For the Cases $k=1,2,3,4$ and $\ell=1,2$}
We are now intended to give an explicit formula for the $\mathcal{F}(n;k,\ell)$ and $\mathcal{H}(n;k,\ell)$ when $k=1,2,3,4$ and $\ell=1,2$. In the following proposition, we show the number of natural divisors of $n$ by $\tau(n)$. Moreover,
\[\varepsilon_i(n)=\left\{\begin{array}{ll}
1 & \mbox{if~}\sqrt[i]{n}\in\mathbb{N}\\
0 & \mbox{otherwise~}
\end{array}\right. \]
\begin{proposition}
Let $n>1$ be a positive integer. Then
\begin{itemize}
\item[i.] $\mathcal{F}(n;1,1)=\mathcal{F}(n,1,2)=1$,
\item[ii.] $\mathcal{F}(n;2,1) =\lceil\frac{\tau(n)}{2}\rceil\mbox{~and~}\mathcal{F}(n;2,2)=\lceil\frac{\tau(n)}{2}\rceil-1$.
\end{itemize}
\end{proposition}
\begin{proof}
The equalities in item (i) are obvious. To prove item (ii), we note that $\mathcal{H}(n;2,1)$ is number of ways to write $n$ as $xy$, where $x$ is a natural divisor of $n$. Thus $\mathcal{H}(n;2,1)=\tau(n)$. Now if $n$ is not a perfect square then $\tau(n)$ is even and so $\mathcal{F}(n;2,1)=\frac{\tau(n)}{2}=\lceil\frac{\tau(n)}{2}\rceil$ and if $n$ is a perfect square then $\mathcal{F}(n;2,1)=\frac{\tau(n)-1}{2}+1=\lceil\frac{\tau(n)}{2}\rceil$. Using Corollary~\ref{1to2}, we now have $\mathcal{F}(n;2,2)=\mathcal{F}(n;2,1)-1$.
\end{proof}
\begin{theorem}
Let $n>1$ be a positive integer and $p_1^{\beta_1}\ldots p_n^{\beta_n}$ be its prime decomposition. Then
\begin{itemize}
\item[i.] $\mathcal{F}(n;3,1)=\frac{1}{6}\prod_{j=1}^n{\beta_j+2\choose 2}+\frac{1}{2}\prod_{j=1}^n\lfloor\frac{\beta_j+2}{2}\rfloor+\frac{\varepsilon_3(n)}{3}$,
\item[ii.]$\mathcal{F}(n;3,2) =\mathcal{F}(n;3,1)-\lceil\frac{\tau(n)}{2}\rceil$.
\end{itemize}
\end{theorem}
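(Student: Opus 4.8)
The plan is to count the ordered factorizations $m=m_1m_2m_3$ and then pass to the ordered-by-size count by sorting according to how many of the three factors coincide. First I would invoke Lemma~\ref{nu1} to record that the number of \emph{ordered} triples is $\nu_1(m,3)=\prod_{j=1}^n\binom{\alpha_j+2}{2}$. Writing $A$, $B$, $C$ for the number of solutions $m_1\leqslant m_2\leqslant m_3$ having respectively three distinct factors, exactly two equal factors, and all three factors equal, the target quantity is $\mu_1(m,3)=A+B+C$, and counting the distinct orderings of each type gives the single relation
\[\nu_1(m,3)=6A+3B+C,\]
since a triple with all factors distinct has $3!=6$ orderings, one with exactly two equal has $3$, and the constant triple has $1$.

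Next I would evaluate $B$ and $C$ directly. A solution with all three factors equal is exactly a representation $m=a^3$, so $C=\varepsilon_3(m)$. A solution with two equal factors is a representation $m=a^2b$; counting these amounts to choosing, for each prime $p_j$, the exponent of $p_j$ in $a$ anywhere from $0$ to $\lfloor\alpha_j/2\rfloor$ (after which $b$ is forced), so the number of representations $m=a^2b$ is $\prod_{j=1}^n(\lfloor\alpha_j/2\rfloor+1)=\prod_{j=1}^n\lfloor\frac{\alpha_j+2}{2}\rfloor$. Among these, the cases with $a=b$ are precisely the perfect-cube representations, which number $\varepsilon_3(m)$; hence the representations with $a\neq b$, which are in bijection with the solutions having exactly two equal factors, give $B=\prod_{j=1}^n\lfloor\frac{\alpha_j+2}{2}\rfloor-\varepsilon_3(m)$.

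Then I would solve for $\mu_1(m,3)$. Substituting $A=\mu_1(m,3)-B-C$ into $\nu_1(m,3)=6A+3B+C$ yields $6\mu_1(m,3)=\nu_1(m,3)+3B+5C$, and inserting the values of $B$ and $C$ collapses to
\[\mu_1(m,3)=\frac{1}{6}\prod_{j=1}^n\binom{\alpha_j+2}{2}+\frac{1}{2}\prod_{j=1}^n\lfloor\tfrac{\alpha_j+2}{2}\rfloor+\frac{\varepsilon_3(m)}{3},\]
as claimed. For the second formula I would use Corollary~\ref{1to2} with $k=3$, namely $\mu_1(m,3)=\mu_2(m,1)+\mu_2(m,2)+\mu_2(m,3)$, together with the already-established values $\mu_2(m,1)=1$ and $\mu_2(m,2)=\lceil\frac{\tau(m)}{2}\rceil-1$; since these sum to $\lceil\frac{\tau(m)}{2}\rceil$, solving gives $\mu_2(m,3)=\mu_1(m,3)-\lceil\frac{\tau(m)}{2}\rceil$.

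The main obstacle is the bookkeeping around the two-equal case: one must be careful that the product $\prod_{j=1}^n\lfloor\frac{\alpha_j+2}{2}\rfloor$ tabulates the representations $m=a^2b$ \emph{allowing} $a=b$, and hence overlaps the all-equal class by exactly the perfect-cube solution; subtracting $\varepsilon_3(m)$ is precisely what repairs this, and it is also what makes the fractional coefficients $\tfrac16,\tfrac12,\tfrac13$ combine into an integer total. Everything else is a single linear elimination, so once the three occupation types are counted correctly the result is forced.
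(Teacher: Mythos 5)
Your proof is correct and follows essentially the same route as the paper: your classes $A$, $B$, $C$ are exactly the paper's $\mu'_1(m;1,1,1)$, $\mu'_1(m;1,2)$, $\mu'_1(m;3)$, your relation $\nu_1(m,3)=6A+3B+C$ with $B=\prod_{j=1}^n\lfloor\frac{\alpha_j+2}{2}\rfloor-\varepsilon_3(m)$ and $C=\varepsilon_3(m)$ matches the paper's computation, and your linear elimination is just an algebraic rearrangement of the paper's solving for $\mu'_1(m;1,1,1)$ before summing. Your derivation of $\mu_2(m,3)$ via Corollary~\ref{1to2} with $\mu_2(m,1)=1$ and $\mu_2(m,2)=\lceil\frac{\tau(m)}{2}\rceil-1$ is also identical to the paper's argument.
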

\begin{proof}
We have
\begin{eqnarray*}
\mathcal{H}(n;3,1)&=&\mathcal{H}(n;\{1,1,1\},1)+\mathcal{H}(n;\{1,2\},1)+\mathcal{H}(n;\{3\},1)\\
&=&6\mathcal{F}(n;\{1,1,1\},1)+3\mathcal{F}(n;\{1,2\},1)+\mathcal{F}(n;\{3\},1).
\end{eqnarray*}
We know that $\mathcal{F}(n;\{1,2\},1)$ is the number of ways to write $n$ as $xy^2$, where $x\neq y$. This is equal to the number of $y$'s such that $y^2\mid n$ minus the number of ways such that $\frac{n}{y^2}=y$, in which the later is equal to $\varepsilon_3(n)$. The number of $y$'s such that $y^2\mid n$ is $\prod_{j=1}^n\lfloor\frac{\beta_j+2}2\rfloor$. Moreover, $\mathcal{F}(n;\{3\},1)=\varepsilon_3(n)$. Thus Lemma~\ref{nu1} implies that
\begin{align*}
\mathcal{F}(n;\{1,1,1\},1)&=\frac{1}{6}\big(\mathcal{H}(n;\{3\},1)-3\big(\prod_{j=1}^n\lfloor\frac{\beta_j+2}2\rfloor-\varepsilon_3(n)\big)-\varepsilon_3(n)\big)\\
&=\frac{1}{6}\mathcal{H}(n;\{3\},1)-\frac{1}{2}\prod_{j=1}^n\lfloor\frac{\beta_j+2}2\rfloor+\frac{1}{3}\varepsilon_3(n)\\
&=\frac{1}{6}\prod_{j=1}^n{\beta_j+2\choose 2}-\frac{1}{2}\prod_{j=1}^n\lfloor\frac{\beta_j+2}2\rfloor+\frac{\varepsilon_3(n)}{3}.
\end{align*}
Therefore, we must have
\begin{align*}
\mathcal{F}(n;3,1)&=\mathcal{F}(n;\{1,1,1\},1)+\mathcal{F}(n;\{1,2\},1)+\mathcal{F}(n;\{3\},1)\\
&=\frac{1}{6}\prod_{j=1}^n{\beta_j+2\choose 2}-\frac{1}{2}\prod_{j=1}^n\lfloor\frac{\beta_j+2}2\rfloor+\frac{\varepsilon_3(n)}{3}+\prod_{j=1}^n\lfloor\frac{\beta_j+2}2\rfloor-\varepsilon_3(n)+\varepsilon_3(n).
\end{align*}
This proves item (i).

Using Corollary~\ref{1to2}, we have
\[\mathcal{F}(n;3,2)=\mathcal{F}(n;3,1)-\mathcal{F}(n;2,2)-1=\mathcal{F}(n;3,1)-\lceil\frac{\tau(n)}{2}\rceil+1-1\]
which proves item (ii).
\end{proof}
the following lemma is also easy to prove.
\begin{lemma}\label{sumtau}
Let $k,\ell>1$ be a positive integer and $p_1^{\beta_1}\ldots p_n^{\beta_n}$ be  the prime decomposition of $k$. Then
\[\sum_{d\mid k}\tau(d)=\prod_{j=1}^n{\beta_j+2\choose 2}\]
and
\[\sum_{d\mid k}\varepsilon_\ell(d)=\prod_{j=1}^n\lfloor\frac{\beta_j+\ell}{\ell}\rfloor.\]
\end{lemma}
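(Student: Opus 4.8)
The plan is to recognize both left-hand sides as divisor sums of multiplicative arithmetic functions, which are therefore themselves multiplicative in $k$: if $f$ is multiplicative, then so is the function $k\mapsto\sum_{d\mid k}f(d)$. Here $\tau$ is the classical (multiplicative) divisor function, and $\varepsilon_\ell$ is multiplicative because a product of two coprime integers is a perfect $\ell$-th power exactly when each factor is. Since the two right-hand sides are already displayed as products over the primes $p_j$, it will suffice to verify each identity for a single prime power $k=p^\beta$ and then take the product over $j$.

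First I would dispatch the divisor-sum identity on a prime power:
\[\sum_{d\mid p^\beta}\tau(d)=\sum_{i=0}^{\beta}\tau(p^i)=\sum_{i=0}^{\beta}(i+1)={\beta+2\choose 2},\]
and multiplicativity then yields $\sum_{d\mid k}\tau(d)=\prod_{j=1}^n{\beta_j+2\choose 2}$. A pleasant alternative that avoids invoking multiplicativity ties this to the paper's own machinery: since $\nu_1(m,2)=\tau(m)$ and the recursion $\nu_1(k,3)=\sum_{d\mid k}\nu_1(k/d,2)$ holds (part (i) of the preceding lemma with $\ell=1$), reindexing $d\mapsto k/d$ gives $\sum_{d\mid k}\tau(d)=\nu_1(k,3)$, and Lemma~\ref{nu1} evaluates $\nu_1(k,3)$ as exactly $\prod_{j=1}^n{\beta_j+2\choose 2}$.

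For the second identity I would count the perfect $\ell$-th-power divisors of $k$ directly. A divisor of $p^\beta$ is $p^i$ with $0\le i\le\beta$, and $\varepsilon_\ell(p^i)=1$ precisely when $\ell\mid i$, so
\[\sum_{d\mid p^\beta}\varepsilon_\ell(d)=\Big|\{i:0\le i\le\beta,\ \ell\mid i\}\Big|=\Big\lfloor\frac{\beta}{\ell}\Big\rfloor+1=\Big\lfloor\frac{\beta+\ell}{\ell}\Big\rfloor,\]
and multiplying over the primes gives $\prod_{j=1}^n\lfloor\frac{\beta_j+\ell}{\ell}\rfloor$. The computation is entirely routine; the only points demanding a little care are confirming that $\varepsilon_\ell$ really is multiplicative, remembering that the trivial divisor $1=p^0$ counts as a perfect $\ell$-th power (the $i=0$ term), and checking the floor identity $\lfloor\beta/\ell\rfloor+1=\lfloor(\beta+\ell)/\ell\rfloor$, which holds because $\ell$ is an integer. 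I do not anticipate any genuine obstacle beyond these bookkeeping details.
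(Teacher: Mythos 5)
Your proof is correct in all details: the prime-power computations $\sum_{i=0}^{\beta}(i+1)={\beta+2\choose 2}$ and $\lfloor\beta/\ell\rfloor+1=\lfloor(\beta+\ell)/\ell\rfloor$ are right, and the multiplicativity of $d\mapsto\sum_{d\mid k}f(d)$ for multiplicative $f$ (with $\varepsilon_\ell$ multiplicative by unique factorization) legitimately reduces everything to prime powers. There is nothing in the paper to compare against: Lemma~\ref{sumtau} is stated there with no proof at all, evidently regarded as routine, so your write-up actually supplies an argument the authors omitted. Your alternative derivation of the first identity is a nice touch that stays inside the paper's own machinery: combining $\nu_1(m,2)=\tau(m)$ with the recursion $\nu_1(k,3)=\sum_{d\mid k}\nu_1(\frac{k}{d},2)$ and the closed form $\nu_1(k,3)=\prod_{j=1}^n{\beta_j+2\choose 2}$ from Lemma~\ref{nu1} gives the identity without invoking general multiplicativity, and it explains conceptually why the ball-and-cell count from Lemma~\ref{nu1} appears here: $\sum_{d\mid k}\tau(d)$ counts ordered factorizations of $k$ into three parts. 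Either route is complete; the multiplicative-function argument is the more standard and generalizes immediately (e.g.\ it handles the $\varepsilon_\ell$ sum uniformly, where the $\nu$-based trick has no direct analogue).
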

\begin{theorem}
Let $n>1$ be a positive integer and $p_1^{\beta_1}\ldots p_n^{\beta_n}$ be its prime decomposition. then
\begin{align*}
\mathcal{F}(n;4,1)&=\frac{1}{24}\prod_{i=1}^n{\beta_i+3\choose 3}+\frac{1}{3}\prod_{i=1}^n\lfloor\frac{\beta_i+3}3\rfloor
+\frac{1}{4}(\prod_{i=1}^n \lfloor\frac{\beta_i+2}{2}\rfloor(\beta_i-\lfloor\frac{\beta_i-2}{2}\rfloor))\\
&\,\,+\frac{\varepsilon_2(n)}4\prod_{i=1}^n\lfloor\frac{\beta_i+2}2\rfloor- \frac{\varepsilon_2(n)}4\lceil\frac{\tau(\sqrt{n})}{2}\rceil+\frac{3\varepsilon_4(n)}8.
\end{align*}
Moreover,
\[\mathcal{F}(n;4,2)=\mathcal{F}(n;4,1)-\mathcal{F}(n;3,1).\]
\end{theorem}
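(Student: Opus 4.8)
The plan is to mirror the argument used for $k=3$, decomposing a multiplicative partition of $m$ into four parts according to the multiplicity pattern of its distinct factors. The partitions of $4$ are $1+1+1+1$, $1+1+2$, $2+2$, $1+3$ and $4$, so by parts (iv) and (v) of the preceding lemma,
\[\nu_1(m,4)=24\mu'_1(m;1,1,1,1)+12\mu'_1(m;1,1,2)+6\mu'_1(m;2,2)+4\mu'_1(m;1,3)+\mu'_1(m;4),\]
while $\mu_1(m,4)$ is the plain sum of the same five $\mu'_1$ terms. Since Lemma~\ref{nu1} gives $\nu_1(m,4)=\prod_{i=1}^n\binom{\alpha_i+3}{3}$, the scheme is to evaluate the four ``non-generic'' patterns directly, solve the displayed identity for $\mu'_1(m;1,1,1,1)$, and then add the five quantities. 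Eliminating $\mu'_1(m;1,1,1,1)$ yields
\[\mu_1(m,4)=\tfrac1{24}\nu_1(m,4)+\tfrac12\mu'_1(m;1,1,2)+\tfrac34\mu'_1(m;2,2)+\tfrac56\mu'_1(m;1,3)+\tfrac{23}{24}\mu'_1(m;4),\]
so only these four patterns must be computed.

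The three simplest are immediate. A pattern $4$ means $m=a^4$, so $\mu'_1(m;4)=\varepsilon_4(m)$. A pattern $1+3$ means $m=ab^3$ with $a\neq b$; counting $b$ with $b^3\mid m$ gives $\prod_{i=1}^n\lfloor\frac{\alpha_i+3}3\rfloor$ choices, from which the single degenerate case $a=b$ (that is, $m=b^4$) is removed, so $\mu'_1(m;1,3)=\prod_{i=1}^n\lfloor\frac{\alpha_i+3}3\rfloor-\varepsilon_4(m)$. A pattern $2+2$ means $m=a^2b^2=(ab)^2$ with $a<b$, which forces $m$ to be a square and counts the factorizations $\sqrt m=ab$ with $a<b$; hence $\mu'_1(m;2,2)=\varepsilon_2(m)\big(\lceil\frac{\tau(\sqrt m)}2\rceil-\varepsilon_4(m)\big)$, the subtraction removing the case $a=b$.

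The main obstacle is the pattern $1+1+2$, i.e. counting $m=abc^2$ with $a,b,c$ pairwise distinct. I would fix the squared factor $c$ (so $c^2\mid m$) and count factorizations $m/c^2=ab$ with $a<b$, which is $\lceil\frac{\tau(m/c^2)}2\rceil-\varepsilon_2(m/c^2)=\frac12\big(\tau(m/c^2)-\varepsilon_2(m/c^2)\big)$, and then exclude those with $a=c$ or $b=c$; such a collision affects exactly one unordered pair precisely when $c^3\mid m$ and $c^4\neq m$, so the total number of exclusions is $\prod_{i=1}^n\lfloor\frac{\alpha_i+3}3\rfloor-\varepsilon_4(m)$. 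Summing over $c$ and using the two multiplicative identities
\[\sum_{c^2\mid m}\tau\big(\tfrac m{c^2}\big)=\prod_{i=1}^n\lfloor\tfrac{\alpha_i+2}2\rfloor\big(\alpha_i-\lfloor\tfrac{\alpha_i-2}2\rfloor\big),\qquad \sum_{c^2\mid m}\varepsilon_2\big(\tfrac m{c^2}\big)=\varepsilon_2(m)\,\tau(\sqrt m),\]
each proved by a one-prime computation, produces a closed form for $\mu'_1(m;1,1,2)$. The first identity is the crux: the inner prime-power sum $\sum_{\gamma=0}^{\lfloor\alpha/2\rfloor}(\alpha-2\gamma+1)$ must be recognized as $\lfloor\frac{\alpha+2}2\rfloor\big(\alpha-\lfloor\frac{\alpha-2}2\rfloor\big)$ by splitting into the even and odd cases of $\alpha$.

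Substituting the four expressions into the reduced formula for $\mu_1(m,4)$ and simplifying finishes the first claim; here I would use $\lceil\frac x2\rceil=\frac12\big(x+\varepsilon_2\big)$ for the relevant argument, the equality $\varepsilon_2(\sqrt m)=\varepsilon_4(m)$, and the fact that $\varepsilon_2(m)\prod_{i=1}^n\lfloor\frac{\alpha_i+2}2\rfloor=\varepsilon_2(m)\,\tau(\sqrt m)$ to reconcile the pooled $\varepsilon_2$- and $\varepsilon_4$-contributions with the stated coefficients $\frac14$ and $\frac38$. Finally, the relation $\mu_2(m,4)=\mu_1(m,4)-\mu_1(m,3)$ follows at once from Corollary~\ref{1to2}: writing $\mu_1(m,4)=\sum_{i=1}^4\mu_2(m,i)$ and $\mu_1(m,3)=\sum_{i=1}^3\mu_2(m,i)$ and subtracting leaves exactly $\mu_2(m,4)$.
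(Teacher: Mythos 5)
Your argument is correct, and its skeleton is the paper's: the five-pattern decomposition $\nu_1(m,4)=24\mu'_1(m;1,1,1,1)+12\mu'_1(m;1,1,2)+6\mu'_1(m;2,2)+4\mu'_1(m;1,3)+\mu'_1(m;4)$ fed by Lemma~\ref{nu1}, direct evaluation of the degenerate patterns, recovery of $\mu'_1(m;1,1,1,1)$, and Corollary~\ref{1to2} for $\mu_2(m,4)=\mu_1(m,4)-\mu_1(m,3)$. Where you genuinely depart is in the bookkeeping for the pattern $1+1+2$, and your route is the cleaner one. The paper counts, for each $z$ with $z^2\mid m$, pairs $x\leqslant y$ with $xy=m/z^2$ (the sum $\sum_{z^2\mid m}\lceil\tau(m/z^2)/2\rceil$) and then runs the entire proof twice, splitting on whether $m$ is a perfect square to evaluate that sum by the parity of $\tau$; you instead count strict pairs via $\frac12\big(\tau(m/c^2)-\varepsilon_2(m/c^2)\big)$ and absorb the square-dependence in one stroke with $\sum_{c^2\mid m}\varepsilon_2(m/c^2)=\varepsilon_2(m)\,\tau(\sqrt m)$, so no case split is needed. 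Your collision accounting is also the correct one: under the paper's convention ($x\leqslant y$ allowed), each factorization $m=x^2z^2$ with $x\neq z$ is hit \emph{twice} in its sum (as $(z,\{x,x\})$ and as $(x,\{z,z\})$), so the paper's displayed identity $\mu'_1(m;1,1,2)=\sum_{z^2\mid m}\lceil\tau(m/z^2)/2\rceil-\mu'_1(m;1,3)-\mu'_1(m;2,2)-\varepsilon_4(m)$ should read $2\mu'_1(m;2,2)$ — a slip that is compensated by later sign errors in the paper's intermediate lines (its final formula is nevertheless right; e.g.\ both routes give $\mu_1(36,4)=9$ and $\mu_1(16,4)=5$), whereas your strict-pair version never incurs it. I checked your elimination identity $\mu_1(m,4)=\frac1{24}\nu_1(m,4)+\frac12\mu'_1(m;1,1,2)+\frac34\mu'_1(m;2,2)+\frac56\mu'_1(m;1,3)+\frac{23}{24}\mu'_1(m;4)$, your two divisor-sum identities (the one-prime sum $\sum_{\gamma=0}^{\lfloor\alpha/2\rfloor}(\alpha-2\gamma+1)$ does equal $\lfloor\frac{\alpha+2}2\rfloor\big(\alpha-\lfloor\frac{\alpha-2}2\rfloor\big)$ in both parities), and the final reconciliation: both your assembled expression and the stated formula reduce to $\frac1{24}\prod_{i=1}^n{\alpha_i+3\choose 3}+\frac13\prod_{i=1}^n\lfloor\frac{\alpha_i+3}3\rfloor+\frac14\prod_{i=1}^n\lfloor\frac{\alpha_i+2}2\rfloor\big(\alpha_i-\lfloor\frac{\alpha_i-2}2\rfloor\big)+\frac18\varepsilon_2(m)\tau(\sqrt m)+\frac14\varepsilon_4(m)$, so the proof is complete as proposed. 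What the paper's version buys is only that the non-square case is dispatched immediately ($\mu'_1(m;2,2)=\varepsilon_4(m)=0$ kills the troublesome terms); what yours buys is a uniform computation, strictly correct intermediate identities, and a shorter path to the stated coefficients $\frac14$ and $\frac38$.
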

\begin{proof}
We have
\begin{align*}
\mathcal{H}(n;4,1)&=\mathcal{H}(n;\{1,1,1,1\},1)+\mathcal{H}(n;\{1,1,2\},1)\\
&+\mathcal{H}(n;\{1,3\},1)+\mathcal{H}(n;\{2,2\},1)+\mathcal{H}(n;\{4\})\\
&=24\mathcal{F}(n;\{1,1,1,1\},1)+12\mathcal{F}(n;\{1,1,2\},1)\\
&+4\mathcal{F}(n;\{1,3\},1)+6\mathcal{F}(n;\{2,2\},1)+\mathcal{F}(n;\{4\},1).
\end{align*}
On the other hand, $\mathcal{F}(n;\{1,1,2\},1)$ is the number of ways to write $n$ as $xyz^2$, where $x, y$ and $z$ are different positive integers. This is equal to the number of $z$'s such that $z^2\mid n$ minus the number of ways to write $n$ as $xz^3, x^2z^2$ or $z^4$, where $x\neq z$. The number of $z$'s such that $z^2\mid n$ is $\sum_{ z^2\mid n}\lceil{\frac{\tau(\frac{n}{z^2})}{2}}\rceil$. Thus
\begin{align*}
\mathcal{F}(n;\{1,1,2\},1)=\sum_{ z^2\mid n}\lceil{\frac{\tau(\frac{n}{z^2})}{2}}\rceil-\mathcal{F}(n;\{1,3\},1)-\mathcal{F}(n;\{2,2\},1)-\varepsilon_4(n).
\end{align*}
If $n$ is not a perfect square then $\tau(n)$ is even. Let $z=p_1^{\beta_1}\ldots p_n^{\beta_n}$. So, there must exist an integer $i$ such that $\beta_i$ is odd. Hence
\begin{align*}
\sum_{ z^2\mid n}\lceil{\frac{\tau(\frac{n}{z^2})}{2}}\rceil&=\sum_{\ z^2\mid n}\frac{1}{2}\tau(p_1^{\beta_1-2\beta_1}\ldots p_n^{\beta_n-2\beta_n})
 \\&=\frac{1}{2}\prod_{i=1}^n\sum_{0\leq {\beta_i}\leq\frac{\beta_i}{2}}(\beta_i-2\beta_i+1)
 \\&=\frac{1}{2}(\prod_{i=1}^n \lfloor\frac{\beta_i+2}{2}\rfloor ( \beta_i-\lfloor\frac{\beta_i-2}{2}\rfloor)).
 \end{align*}
Moreover, $\mathcal{F}(n;\{1,3\},1)$ is the number of ways to write $n$ as $xz^3$, where $x\neq z$. This is equal to the number of $z$'s such that $z^3\mid n$ minus the number of ways to write $n$ as $z^4$. The number of $z$'s such that $z^3\mid n$ is $\prod_{j=1}^n\lfloor\frac{\beta_j+3}3\rfloor$. Whence
\[\mathcal{F}(n;\{1,3\},1)=\prod_{j=1}^n\lfloor\frac{\beta_j+3}3\rfloor-\varepsilon_4(n).\]
Since $n$ is not a perfect square, $\mathcal{F}(n;\{2,2\},1)=0$. Thus implies
\begin{align*}
\mathcal{F}(n;\{1,1,1,1\{,1)&=\frac{1}{24}\big(\mathcal{H}(n;\{4\},1)
-12\mathcal{F}(n;\{1,1,2\},1)-4\mathcal{F}(n;\{1,3\{,1)-6\mathcal{F}(n;\{2,2\},1)-\varepsilon_4(n)\big)\\
&=\frac{1}{24} \prod_{i=1}^n{\beta_i+3\choose 3}-
\frac{1}{4}(\prod_{i=1}^n \lfloor\frac{\beta_i+2}{2}\rfloor ( \beta_i-\lfloor\frac{\beta_i-2}{2}\rfloor))
\\&\,\,+ \frac{1}{3}\mathcal{F}(n;\{1,3\},1)+ \frac{1}{4}\mathcal{F}(n;\{2,2\},1)+ \frac{11}{24}\varepsilon_4(n)\big)
\\&=\frac{1}{24} \prod_{i=1}^n{\beta_i+3\choose 3}-
\frac{1}{4}(\prod_{i=1}^n \lfloor\frac{\beta_i+2}{2}\rfloor ( \beta_i-\lfloor\frac{\beta_i-2}{2}\rfloor))
+ \frac{1}{3}\mathcal{F}(n;\{1,3\},1).
\end{align*}
Therefore, we have
\begin{align*}
\mathcal{F}(n;4,1)&=\mathcal{F}(n;\{1,1,1,1\},1)+\mathcal{F}(n;\{1,1,2\},1)+\mathcal{F}(n;\{1,3\},1)+\mathcal{F}(n;\{2,2\},1)+\mathcal{F}(n;\{4\},1)\\
&=\frac{1}{24}\prod_{i=1}^n{\beta_i+3\choose 3}+\frac{1}{3}\prod_{i=1}^n\lfloor\frac{\beta_i+3}3\rfloor+\frac{1}{4}(\prod_{i=1}^n \lfloor\frac{\beta_i+2}{2}\rfloor ( \beta_i-\lfloor\frac{\beta_i-2}{2}\rfloor)).
\end{align*}
Now let $n$ be a perfect square. Then $\tau(n)$ is odd and we have
\begin{align*}
\sum_{ z^2\mid n}\lceil{\frac{\tau(\frac{n}{d^2})}{2}}\rceil&=\sum_{ z^2\mid n}(\frac{\tau(\frac{m}{d^2})+1}{2})
\\&=\sum_{\ z^2\mid n}\frac{1}{2}\tau(p_1^{\beta_1-2\beta_1}\ldots p_n^{\beta_n-2\beta_n})+ \frac{1}{2}\sum_{ z^2\mid n}1
 \\&=\frac{1}{2}\sum_{0\leq {\beta_i}\leq\frac{\beta_i}{2}}\prod_{i=1}^n(\beta_i-2\beta_i+1)+\frac{1}{2}\sum_{0\leq {\beta_i}\leq\frac{\beta_i}{2}} 1
 \\&=\frac{1}{2}\prod_{i=1}^n\sum_{0\leq {\beta_i}\leq\frac{\beta_i}{2}}(\beta_i-2\beta_i+1)
+ \frac{1}{2}\prod_{i=1}^n (\lfloor\frac{\beta_i+2}{2}\rfloor)
\\&=\frac{1}{2}(\prod_{i=1}^n \lfloor\frac{\beta_i+2}{2}\rfloor ( \beta_i-\lfloor\frac{\beta_i-2}{2}\rfloor))
+\frac{1}{2}\prod_{i=1}^n (\lfloor\frac{\beta_i+2}{2}\rfloor)
\end{align*}
Thus
\begin{align*}
 \mathcal{F}(n;\{1,1,1,1\},1)&=\frac{1}{24} \prod_{i=1}^n{\beta_i+3\choose 3}
 -\frac{1}{4}(\prod_{i=1}^n \lfloor\frac{\beta_i+2}{2}\rfloor ( \beta_i-\lfloor\frac{\beta_i-2}{2}\rfloor))
\\&+\frac{1}{2}\prod_{i=1}^n (\lfloor\frac{\beta_i+2}{2}\rfloor)+ \frac{1}{3}\mathcal{F}(n;\{1,3\},1)
- \frac{1}{4}\mathcal{F}(n;\{2,2\},1)- \frac{1}{24}\varepsilon_4(n).
\end{align*}
Furthermore, $\mathcal{F}(n;\{2,2\},1)$ is the number of ways to write $n$ as $x^2y^2=(xy)^2$, where $x\neq y$. If $n$ is not a perfect square then this number is 0 and if $n$ is a perfect square then $\sqrt{n}\in\mathbb{N}$ and thus
\begin{eqnarray*}
\mathcal{F}(n;\{2,2\},1)&=&\varepsilon_2(n)\mathcal{F}(\sqrt{n};\{1,1\},2)=\varepsilon_2(n)\big(\mathcal{F}(\sqrt{n},\{2\},2)-\varepsilon_2(\sqrt{n})\big)\\
&=&\varepsilon_2(n)\big(\lceil\frac{\tau(\sqrt{n})}{2}\rceil-\varepsilon_4(n)\big)\\
&=&\varepsilon_2(n)\lceil\frac{\tau(\sqrt{n})}{2}\rceil-\varepsilon_4(n).
\end{eqnarray*}
Therefore
\begin{align*}
\mathcal{F}(n;4,1)&=\frac{1}{24}\prod_{i=1}^n{\beta_i+3\choose 3}+\frac{1}{3}\prod_{i=1}^n\lfloor\frac{\beta_i+3}3\rfloor
+\frac{1}{4}(\prod_{i=1}^n \lfloor\frac{\beta_i+2}{2}\rfloor(\beta_i-\lfloor\frac{\beta_i-2}{2}\rfloor))\\
&\,\,+\frac{\varepsilon_2(n)}4\prod_{i=1}^n\lfloor\frac{\beta_i+2}2\rfloor- \frac{\varepsilon_2(n)}4\lceil\frac{\tau(\sqrt{n})}{2}\rceil+\frac{3\varepsilon_4(n)}8.
\end{align*}
This proves the first assertion. The second part can be driven from the first assertion.
\end{proof}

 Note that if a positive integer $m$ is a prime power, say $m=p^n$, then $\mathcal{F}_2(m,k)=\rho(n,k)$, where $\rho$ is the additive partition function.
 \begin{corollary}
 Let $n$ be a positive integer. Then the number of all solutions to the equation
 \[x_{1}+x_{2}+x_3=n\]
 in $\mathbb{N}$, under the condition $x_1\leq x_2 \leq x_3$, is
\[\frac{1}{6}{n+2\choose 2}+\frac{1}{2}\lfloor\frac{n+2}{2}\rfloor+\frac{\varepsilon_3(p^n)}{3}.\]
 \end{corollary}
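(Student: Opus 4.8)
The plan is to reduce this additive counting problem to the multiplicative quantity $\mu_1(p^n,3)$, which the preceding theorem already evaluates, by passing to exponents. First I would fix any prime $p$ and consider the map sending a triple $(x_1,x_2,x_3)$ to $(p^{x_1},p^{x_2},p^{x_3})$. Since $p>1$, the function $t\mapsto p^t$ is strictly increasing, so the chain $x_1\leqslant x_2\leqslant x_3$ translates exactly into $p^{x_1}\leqslant p^{x_2}\leqslant p^{x_3}$, while $x_1+x_2+x_3=n$ translates into $p^{x_1}p^{x_2}p^{x_3}=p^n$. The inverse map is $m_i\mapsto\log_p m_i$, which is well defined because every divisor of $p^n$ is a power of $p$. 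Thus we obtain a bijection between the solutions of $x_1+x_2+x_3=n$ with $x_1\leqslant x_2\leqslant x_3$ and the factorizations $p^n=m_1m_2m_3$ with $1\leqslant m_1\leqslant m_2\leqslant m_3$; here the exponent range $x_i\geqslant 0$ corresponds precisely to the constraint $m_i\geqslant 1$ built into $\mu_1$.

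By the definition of the multiplicative partition function, the number of such factorizations is exactly $\mu_1(p^n,3)$, so the sought count equals $\mu_1(p^n,3)$. The second step is then a direct specialization of the formula for $\mu_1(m,3)$ to $m=p^n$. The prime decomposition of $p^n$ consists of the single factor $p^n$, so in that formula there is one index with $\alpha_1=n$. Hence $\prod_{j}\binom{\alpha_j+2}{2}$ collapses to $\binom{n+2}{2}$, the product $\prod_{j}\lfloor\frac{\alpha_j+2}{2}\rfloor$ collapses to $\lfloor\frac{n+2}{2}\rfloor$, and $\varepsilon_3(m)$ becomes $\varepsilon_3(p^n)$, which reproduces the claimed expression term by term.

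I expect no genuine obstacle: once the exponent bijection is in place the result is a straight substitution, so the ``hard part'' amounts only to two bookkeeping checks. One is that the equivalence of the two chains of inequalities relies on $p>1$, which I would state explicitly to justify the order-preserving passage between $m_i$ and $x_i$. The other is that the displayed answer does not mention the prime $p$ at all, so I would remark that the right-hand side is genuinely independent of the choice of $p$: indeed $\varepsilon_3(p^n)=1$ precisely when $3\mid n$, regardless of which prime is used, so the count $\mu_1(p^n,3)$ depends only on $n$ and the formula is well posed.
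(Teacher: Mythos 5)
Your proof is correct and is essentially the argument the paper intends: the corollary appears right after the remark that for a prime power $m=p^n$ multiplicative partitions reduce to additive ones, and your exponent bijection $x_i\mapsto p^{x_i}$ followed by substituting $\alpha_1=n$ into the formula for $\mu_1(m,3)$ is exactly that reduction, made explicit (the paper itself gives no written proof). Your bookkeeping check that $x_i\geqslant 0$ corresponds to $m_i\geqslant 1$ is a genuine service: it shows the corollary's ``$\mathbb{N}$'' must include $0$ (e.g.\ for $n=1$ the formula gives $1$, counting $(0,0,1)$, while there is no solution in positive integers), a point the paper leaves ambiguous since its neighboring remark $\mu_2(p^n,k)=\Pi(n,k)$ concerns the positive case.
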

 \begin{corollary}
 Let $n$ be positive a integer. Then the number of all solutions of the equation
 \[x_{1}+x_{2}+x_{3}+x_4=n\]
 in $\mathbb{N}$, under the condition $x_1\leq x_2 \leq x_3\leq x_4$, is
 \begin{eqnarray*}&&\frac{1}{24}{n+3\choose 3}+\frac{1}{3}\lfloor\frac{n+3}3\rfloor
+\frac{1}{4}(\lfloor\frac{n+2}{2}\rfloor ( n-\lfloor\frac{n-2}{2}\rfloor))
\\&+&\frac{\varepsilon_2(p^n)}4\lfloor\frac{n+2}2\rfloor-
 \frac{\varepsilon_2(p^n)}4\lceil\frac{\tau(\sqrt{p^n})}{2}\rceil
 +\frac{3\varepsilon_4(p^n)}8.\end{eqnarray*}
 \end{corollary}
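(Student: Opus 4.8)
The plan is to obtain this corollary as a direct specialization of the preceding theorem (the formula for $\mu_1(m,4)$) to the case of a prime power $m=p^n$, combined with a bijection that converts a multiplicative factorization of $p^n$ into an ordered additive decomposition of its exponent $n$. This is precisely the multiplicative-to-additive dictionary already foreshadowed by the remark $\mu_2(p^n,k)=\Pi(n,k)$ immediately preceding the corollary, now applied to $\mu_1$ in place of $\mu_2$.

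First I would set up the bijection. Fix a prime $p$ and write $m=p^n$. Every divisor of $p^n$ has the form $p^j$ with $0\leqslant j\leqslant n$, so any factorization $p^n=m_1m_2m_3m_4$ with $1\leqslant m_1\leqslant m_2\leqslant m_3\leqslant m_4$ is encoded uniquely by exponents $x_1,x_2,x_3,x_4\geqslant 0$ determined by $m_i=p^{x_i}$, and these satisfy $x_1+x_2+x_3+x_4=n$. Since $p\geqslant 2>1$, the map $t\mapsto p^t$ is strictly increasing, so $m_1\leqslant m_2\leqslant m_3\leqslant m_4$ holds if and only if $x_1\leqslant x_2\leqslant x_3\leqslant x_4$. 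The assignment $(m_1,m_2,m_3,m_4)\mapsto(x_1,x_2,x_3,x_4)$ is manifestly reversible, hence a bijection. Consequently the number of ordered solutions of $x_1+x_2+x_3+x_4=n$ with $x_1\leqslant x_2\leqslant x_3\leqslant x_4$ equals exactly $\mu_1(p^n,4)$.

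Next I would specialize the theorem. The prime decomposition of $p^n$ consists of a single prime with exponent $n$; substituting into the formula for $\mu_1(m,4)$ collapses each product $\prod_{i=1}^{\bullet}$ to its single factor with $\alpha_1$ replaced by $n$, sets $m=p^n$, and uses $\sqrt m=p^{n/2}$. The resulting expression is precisely the claimed formula, so the corollary follows by combining this substitution with the bijection of the previous paragraph.

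The only genuine obstacle here is notational rather than mathematical: in the theorem the letter $n$ denotes the number of distinct primes, whereas in the corollary $n$ is the exponent of the single prime. One must therefore rename carefully, setting the number of primes to $1$ and letting $\alpha_1=n$, so that the products over primes disappear and the remaining terms, including the $\varepsilon_2(p^n)$, $\varepsilon_4(p^n)$, and $\tau(\sqrt{p^n})$ contributions, carry over verbatim. No computation is needed beyond tracking these substitutions.
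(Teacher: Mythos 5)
Your proposal is correct and matches the paper's (implicit) argument: the corollary is exactly the theorem's formula for $\mu_1(m,4)$ specialized to $m=p^n$ with a single prime and $\alpha_1=n$, via the standard bijection $m_i=p^{x_i}$ between nondecreasing factorizations of $p^n$ and nondecreasing exponent solutions, which is the same dictionary the paper invokes in the remark $\mu_2(p^n,k)=\Pi(n,k)$ preceding the corollary. Your reading of the solutions as nonnegative (so that factors equal to $1$ correspond to $x_i=0$, giving $\mu_1$ rather than $\mu_2$) is the interpretation under which the stated formula is correct, and your care about the clash between $n$ as prime count in the theorem and $n$ as exponent in the corollary is exactly the only bookkeeping required.
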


\end{document}